\documentclass[12pt]{article}
\author{Marc Munsch\footnote{Supported by the Austrian Science Fund (FWF), START-project Y-901 ``Probabilistic methods in analysis and number theory" headed by Christoph Aistleitner.}\\ TU Graz, Austria \\ munsch@math.tugraz.at \\
and
\\ Tim Trudgian\footnote{Supported by ARC Future Fellowship FT160100094.}  \\ ANU, Australia \\ timothy.trudgian@anu.edu.au}
\title{Square-full primitive roots}
\usepackage{indentfirst}
\usepackage{url}
\usepackage{enumerate}
\usepackage{amsthm}
\usepackage{amsmath}
\usepackage{comment}
\usepackage{fullpage}
\usepackage{amssymb}
\usepackage{booktabs}

\newtheorem{thm}{Theorem}
\newtheorem{prop}{Proposition}

\newtheorem{Lem}{Lemma}
\newtheorem{remark}{Remark}
\newtheorem{cor}{Corollary}

\begin{document}
\maketitle
\begin{abstract}
\noindent
We use character sum estimates to give a bound on the least square-full primitive root modulo a prime. Specifically, we show that there is a square-full primitive root mod $p$ less than $p^{2/3 + 3/(4 \sqrt{e})+ \epsilon}$, and we give some conditional  bounds.
\end{abstract}
\textit{MSC Codes:} 11A07, 11L40.

\section{Introduction}
The distribution of primitive roots is a rich topic in analytic number theory. Let $g(p)$ denote the least positive primitive root modulo a prime $p$; the classic breakthrough result of Burgess \cite{Burgess} is that $g(p)\ll p^{1/4 + \epsilon}$. This seems impossible to improve unconditionally without a completely novel approach. In the absence of such an improvement many authors have proved the existence of small primitive roots with additional properties. For example Ha \cite{MR3084293} has shown that for all primes $p$ there exists a  prime primitive root $\ll p^{3.1}$; Cohen and Trudgian \cite{Tim} showed that for all primes $p$ there is a square-free primitive root less than $p^{0.96}$ --- this was improved by Hunter \cite{Hunter} to $p^{0.88}$. In this article we consider square-full primitive roots.

Recall that an integer $n$ is square-full if for all primes $p|n$ we have $p^{2}|n$. In particular, all squares are square-full. Since squares cannot be primitive roots modulo $p$ (for $p>2$) questions about square-full primitive roots rely on peculiarities in the distribution of square-full-but-not-square integers. 


Shapiro \cite{MR693458} was the first to investigate square-full primitive roots. Let $N_{\blacksquare}(p, x)$ denote the number of square-full primitive roots modulo $p$ that do not exceed $x$. In \cite[Thm.\ 8.5.A.1]{MR693458} Shapiro proves
\begin{equation}\label{ballot}
N_{\blacksquare}(p, x) = \frac{\phi(p-1)}{p-1} \left\{ c \sqrt{x} + O( x^{1/3} p^{1/6} (\log p)^{1/3} 2^{\omega(p-1)})\right\},
\end{equation}
where $\phi(n)$ is Euler's function, $\omega(n)$ denotes the number of distinct prime divisors of $n$, and where 
\begin{equation*}
c = 2\left( 1 - \frac{1}{p}\right) \sum_{(q|p) = -1} \frac{\mu^{2}(q)}{q^{3/2}},
\end{equation*}
in which the sum is over those $q$ that are quadratic non-residues modulo $p$. If $c$ were a constant one could conclude, as Shapiro does on p.\ 308 that the least positive square-full primitive root mod $p$ is at most $p^{1 + \epsilon}$. However, since there can be long runs of quadratic residues, $c$ could be very small compared with $p$. In this article we improve on the error terms in (\ref{ballot}) and estimate $c$ correctly to prove
\begin{thm}\label{lower}
 We have $$ N_{\blacksquare}(p, x) =\frac{\phi(p-1)}{p-1} \left\{\frac{1}{\zeta(3)} \left(1+\frac{1}{p}+\frac{1}{p^2}\right)^{-1} C_p x^{1/2} + O(x^{1/3} (\log x) p^{1/9}(\log p)^{1/6}2^{\omega(p-1)})\right\}, $$ where $C_p \gg p^{-\frac{1}{8\sqrt{e}}}$.
\end{thm}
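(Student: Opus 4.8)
The plan is to detect primitive roots with the standard character indicator and to exploit the canonical factorisation of square-full integers. An integer $n$ coprime to $p$ is a primitive root exactly when
\[
\mathbf{1}_{\mathrm{pr}}(n)=\frac{\phi(p-1)}{p-1}\sum_{d\mid p-1}\frac{\mu(d)}{\phi(d)}\sum_{\mathrm{ord}\,\chi=d}\chi(n)
\]
is nonzero, so $N_{\blacksquare}(p,x)=\sum_{n\le x,\ n\text{ s.f.}}\mathbf{1}_{\mathrm{pr}}(n)$. Writing each square-full $n$ uniquely as $n=a^{2}b^{3}$ with $b$ square-free gives $\chi(n)=\chi^{2}(a)\chi^{3}(b)$ and hence
\[
\sum_{\substack{n\le x\\ n\text{ s.f.}}}\chi(n)=\sum_{\substack{b\le x^{1/3}\\ b\text{ s.f.}}}\chi^{3}(b)\sum_{a\le(x/b^{3})^{1/2}}\chi^{2}(a).
\]
I would substitute this into the decomposition and split the characters according to whether $\chi^{2}$ is principal.

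The point is that $\chi^{2}=\chi_{0}$ precisely when $\mathrm{ord}\,\chi\in\{1,2\}$, and only then is the inner $a$-sum a genuine main term, namely $(x/b^{3})^{1/2}(1-1/p)+O(1)$. The term $d=1$ (where $\chi=\chi_{0}$ and $\chi^{3}=\chi_{0}$) and the term $d=2$ (where $\chi=\left(\tfrac{\cdot}{p}\right)$, $\mu(2)/\phi(2)=-1$ and $\chi^{3}=\chi$) therefore combine, via $\chi_{0}(b)-\left(\tfrac{b}{p}\right)\in\{0,2\}$, into
\[
\frac{\phi(p-1)}{p-1}\,2(1-1/p)\,x^{1/2}\sum_{\substack{b\text{ s.f.}\\ (b/p)=-1}}b^{-3/2},
\]
which is exactly Shapiro's main term. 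Since $(1-1/p)(1+1/p+1/p^{2})=1-p^{-3}$, this matches the stated shape with $C_{p}=2\zeta(3)(1-p^{-3})\sum_{b\text{ s.f.},\,(b/p)=-1}b^{-3/2}$; in particular $C_{p}\asymp\sum_{b\text{ s.f. QNR}}b^{-3/2}$.

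For $\mathrm{ord}\,\chi\ge 3$ the character $\chi^{2}$ is non-principal and I would estimate the inner sum by Burgess, $\bigl|\sum_{a\le A}\chi^{2}(a)\bigr|\ll A^{1-1/r}p^{(r+1)/(4r^{2})}(\log p)^{1/(2r)}$. With $r=3$ this is $\ll x^{1/3}b^{-1}p^{1/9}(\log p)^{1/6}$, so that summing over $b\le x^{1/3}$ costs an extra factor $\log x$; running $\tfrac{|\mu(d)|}{\phi(d)}$ over the $\phi(d)$ characters of each order and then over $d\mid p-1$ contributes $\sum_{d\mid p-1}|\mu(d)|=2^{\omega(p-1)}$. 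The $O(1)$ errors in the two main sums add $O(x^{1/3})$, absorbing also the secondary $x^{1/3}$ term of the square-full count, and altogether this yields the stated error $O(x^{1/3}(\log x)p^{1/9}(\log p)^{1/6}2^{\omega(p-1)})$.

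The main obstacle is the lower bound $C_{p}\gg p^{-1/(8\sqrt{e})}$, i.e. $\sum_{b\text{ s.f. QNR}}b^{-3/2}\gg p^{-1/(8\sqrt{e})}$, where the unusual exponent and all the real difficulty lie, since long runs of quadratic residues can depress this sum. Setting $Y=p^{1/(4\sqrt{e})+\epsilon}$, it suffices to produce $\gg Y$ square-free non-residues in $[1,Y]$, because then
\[
\sum_{\substack{b\le Y\\ b\text{ s.f. QNR}}}b^{-3/2}\ge Y^{-3/2}\,\#\{b\le Y:\ b\text{ s.f. QNR}\}\gg Y^{-1/2}=p^{-1/(8\sqrt{e})-\epsilon}.
\]
To count non-residues below the Burgess range $p^{1/4}$ I would invoke Vinogradov's smooth-number trick: were every prime up to $z$ a residue, so would be every $z$-smooth integer, whereas Burgess forces density $\tfrac12+o(1)$ of residues in $[1,p^{1/4+\epsilon}]$; since $z$-smooth integers have density $\rho(u)$ up to $z^{u}$ and $\rho(\sqrt{e})=1-\log\sqrt{e}=\tfrac12$, non-residues must appear in abundance once $Y$ passes $p^{1/(4\sqrt{e})}$. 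Upgrading this from the existence of one small non-residue to the positive-density count above, robustly enough to keep the square-free restriction, is the heart of the matter.
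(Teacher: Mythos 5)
Your decomposition, the factorisation $m=a^{2}b^{3}$, the Burgess application with $r=3$ to the higher-order characters, and the bookkeeping of the error terms all match the paper's argument (the paper extracts the main term by citing an $L$-function asymptotic from \cite{Marc} rather than by your direct $(x/b^{3})^{1/2}(1-1/p)+O(1)$ count, but your more elementary route gives an admissible $O(x^{1/3})$ error and is fine). The problem is that you have not proved the one statement that carries all the difficulty, namely $C_{p}\gg p^{-1/(8\sqrt{e})}$, and you say so yourself: ``upgrading this \ldots is the heart of the matter.'' The route you sketch cannot deliver it. Vinogradov's smooth-number trick combined with the ordinary Burgess bound (cancellation only for $N\geq p^{1/4+\epsilon}$) yields the \emph{existence} of a single prime non-residue $q\leq p^{1/(4\sqrt{e})+\epsilon}$; feeding one such $q$ into the sum gives only $C_{p}\gg q^{-3/2}\gg p^{-3/(8\sqrt{e})-\epsilon}$, which is strictly weaker than the claimed exponent. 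To get $p^{-1/(8\sqrt{e})}$ you genuinely need a \emph{positive proportion} of non-residues in $[1,p^{1/(4\sqrt{e})}]$, and that does not follow from Burgess at $p^{1/4+\epsilon}$ plus $\rho(\sqrt{e})=1/2$ (the densities there exactly balance, with no room to spare). The paper imports this as a black box: Theorem 4.1 of Banks--Garaev--Heath-Brown--Shparlinski \cite{Banks}, which rests on Hildebrand's extension of the Burgess inequality (nontrivial cancellation already for $N\geq p^{1/(4\sqrt{e})+o(1)}$) together with the Granville--Soundararajan spectrum machinery for multiplicative functions. Without citing or reproving that input, your argument establishes the asymptotic formula but not the stated lower bound on $C_{p}$, and hence not Corollary \ref{leastfull}.

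A smaller point: the square-free restriction on $b$ that worries you is avoidable. Your main term can be rewritten as $\frac{1}{\zeta(3)}\bigl(1+\frac{1}{p}+\frac{1}{p^{2}}\bigr)^{-1}\bigl(L(3/2,\chi_{0})-L(3/2,\chi_{2})\bigr)x^{1/2}$, so that $C_{p}=2\sum_{\chi_{2}(n)=-1}n^{-3/2}$ runs over \emph{all} integers $n$ that are non-residues; the positive-proportion count of \cite{Banks} then applies directly in a dyadic window around $p^{1/(4\sqrt{e})}$, with no need to preserve square-freeness. Also note that taking $Y=p^{1/(4\sqrt{e})+\epsilon}$ as you do would only give $C_{p}\gg p^{-1/(8\sqrt{e})-\epsilon/2}$; the clean exponent in the theorem requires the non-residue count to hold at $Y=p^{1/(4\sqrt{e})}$ itself, which is exactly what Hildebrand's refinement buys.
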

Denote by $g_{\blacksquare}(p)$ the least square-full primitive root modulo $p$. Thus, directly from Theorem \ref{lower} we deduce
\begin{cor}\label{leastfull} $g_{\blacksquare}(p) \ll p^{2/3+3/4\sqrt{e}} (\log p)^{7} 2^{6\omega(p-1)}.$   \end{cor}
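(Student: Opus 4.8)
The plan is to extract the least square-full primitive root from the asymptotic in Theorem \ref{lower} by locating the smallest $x$ for which the main term provably exceeds the error term, thereby forcing $N_{\blacksquare}(p,x)\geq 1$. Since the factor $\phi(p-1)/(p-1)$ is common to both the main and error terms, it plays no role in the comparison and may be discarded. Likewise $\zeta(3)$ and $(1+1/p+1/p^{2})^{-1}$ are bounded above and below by absolute constants, so the main term is $\gg C_p x^{1/2} \gg p^{-1/(8\sqrt{e})}x^{1/2}$ by the stated lower bound on $C_p$.

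First I would demand that the main term dominate, say, twice the error term, which amounts to
$$p^{-1/(8\sqrt{e})}x^{1/2} \gg x^{1/3}(\log x)\,p^{1/9}(\log p)^{1/6}2^{\omega(p-1)}.$$
Isolating $x$ gives $x^{1/6} \gg p^{1/9+1/(8\sqrt{e})}(\log x)(\log p)^{1/6}2^{\omega(p-1)}$, and raising to the sixth power yields
$$x \gg p^{2/3+3/(4\sqrt{e})}(\log x)^{6}(\log p)\,2^{6\omega(p-1)}.$$
For any $x$ satisfying this one has $N_{\blacksquare}(p,x)\geq 1$, so $g_{\blacksquare}(p)$ is at most the smallest admissible $x$.

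The only subtlety is the appearance of $\log x$ on the right-hand side, which I would handle by self-consistency: the inequality forces $x$ to be of polynomial size in $p$, so $\log x \asymp \log p$ and hence $(\log x)^{6} \asymp (\log p)^{6}$. Substituting this back collapses the logarithmic factors to $(\log p)^{7}$ and gives $g_{\blacksquare}(p) \ll p^{2/3+3/(4\sqrt{e})}(\log p)^{7}2^{6\omega(p-1)}$, as claimed. I expect this bootstrapping step to be the only genuine obstacle; everything else is a routine comparison of the two terms in Theorem \ref{lower}, together with the observation that the implied constants can be absorbed into the stated bound.
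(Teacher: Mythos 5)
Your proposal is correct and is precisely the deduction the paper intends (the paper gives no explicit proof, stating only that the corollary follows directly from Theorem \ref{lower}): compare the main term $\gg p^{-1/(8\sqrt e)}x^{1/2}$ with the error term, solve for $x$, and use $\log x \asymp \log p$ (valid since $2^{\omega(p-1)}\ll p^{\epsilon}$ keeps $x$ polynomial in $p$) to collapse the logarithms into $(\log p)^{7}$. The exponent arithmetic checks out: raising $x^{1/6}\gg p^{1/9+1/(8\sqrt e)}(\log x)(\log p)^{1/6}2^{\omega(p-1)}$ to the sixth power gives exactly $p^{2/3+3/(4\sqrt e)}(\log p)^{7}2^{6\omega(p-1)}$ after the bootstrap.
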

The exponent of $p$ in Corollary \ref{leastfull} is $1.121\ldots$. It would be interesting to see whether these methods, all of which can be made explicit, could be improved to bring this exponent below unity. Such an improvement could test the hypothesis made by Cohen and Trudgian \cite[p.\ 16]{Tim}, that $1,052,041$ is the largest prime $p$ with $g_{\blacksquare}(p)\geq p$. 

The outline of this paper is as follows. In \S \ref{burger} we use character sum estimates to prove Theorem \ref{lower}. In \S \ref{ketchup} we obtain better bounds on $g_{\blacksquare}(p)$ conditional on the Generalized Riemann Hypothesis. Finally, in \S \ref{fries} we adapt the methods of this paper to recover a known result about square-free primitive roots.

\subsection*{Acknowledgements}
We are grateful to Igor Shparlinski who suggested the problem to us.

\section{Character sums and average results}\label{burger}
We have (see Shapiro, p.\ 307)
\begin{equation}\label{cheese}
N_{\blacksquare}(p, x) = \frac{\phi(p-1)}{p-1} \left\{  \sum_{\substack{m\leq x \\ m \;\textrm{square-full}}} \chi_0(m) - \sum_{\substack{m\leq x \\ m \;\textrm{square-full}\\ d=2}} \chi(m) + \sum_{\substack{d|(p-1)\\ d>2}} \frac{\mu(d)}{\phi(d)} \sum_{\chi \in \Gamma_{d}} \sum_{\substack{m\leq x \\ m\; \textrm{square-full}}} \chi(m)    \right\},
\end{equation}
where $\chi_0$ denotes the principal character modulo $p$ and $\Gamma_{d}$ the set of characters of order $d$. Therefore the first sum in (\ref{cheese}) counts the number of square-full integers up to $x$ that are co-prime to $p$; in the second sum (over $d=2$) we consider only quadratic characters.

We will make use of the famous character estimate of Burgess (see \cite{Burgess}).

\begin{Lem}\label{Burgess}\upshape[{Burgess inequality}] \itshape
For every integer $r\geq 2$ and every non-principal character $\chi$ modulo $p$, we have 
\begin{equation*}
 \sum_{m \leq x} \chi(m) \ll  x^{1- 1/r} p^{\frac{r+1}{4r^{2}}} (\log p)^{\frac{1}{2r}},
\end{equation*} where the implied constant only depends on $r$.
\end{Lem}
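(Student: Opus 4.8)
The statement is the classical Burgess bound, so my plan is to reproduce Burgess's amplification argument. Write $S=\sum_{m\le x}\chi(m)$ and put $N=\lfloor x\rfloor$; the estimate is only of interest once $N$ exceeds roughly $p^{1/4+\epsilon}$, since for smaller $x$ the trivial bound $|S|\le x$ already beats the right-hand side, so I may assume $x$ is at least a small power of $p$. The engine is the multiplicativity relation $\chi(m+\ell a)=\chi(\ell)\chi(\overline\ell\, m+a)$, valid for every prime $\ell\nmid p$ with inverse $\overline\ell$ modulo $p$. Because shifting the window of summation by $h$ changes $S$ by at most $2|h|$ boundary terms, for each prime $\ell$ in a dyadic range $(P,2P]$ and each integer $a\in(0,A]$ one has
\[
S=\chi(\ell)\sum_{m\le x}\chi(\overline\ell\, m+a)+O(PA).
\]
Averaging this identity over all such $\ell$ and $a$ amplifies the single sum $S$ into a double average, at the cost of an error of size $O(PA)$ after dividing out.

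First I would record, for each residue $t\bmod p$, the multiplicity $\nu(t)=\#\{(\ell,m):\overline\ell\, m\equiv t,\ \ell\in(P,2P]\text{ prime},\ m\le x\}$, so that after averaging $|S|$ is controlled by $\tfrac{1}{\Pi A}\sum_{t}\nu(t)\,|W(t)|+O(PA)$, where $\Pi$ is the number of primes in the range and $W(t)=\sum_{0<a\le A}\chi(t+a)$ is a short incomplete sum. The next step separates the arithmetic weight $\nu$ from the analytic weight $W$ by Hölder's inequality in the shape
\[
\sum_{t}\nu(t)\,|W(t)|\le\Big(\sum_{t}\nu(t)\Big)^{1-1/r}\Big(\sum_{t}\nu(t)^{2}\Big)^{1/(2r)}\Big(\sum_{t}|W(t)|^{2r}\Big)^{1/(2r)}.
\]
Here $\sum_t\nu(t)\asymp\Pi N$ counts pairs directly, while $\sum_t\nu(t)^{2}$ counts solutions of $\ell_{2}m_{1}\equiv\ell_{1}m_{2}\pmod p$; choosing $P$ and $N$ so that $PN<p$ turns the congruence into an equation of integers, after which a divisor-type argument bounds it by $(\Pi N)^{1+o(1)}$.

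The heart of the matter is the even moment $\sum_{t}|W(t)|^{2r}$. Expanding the power gives $\sum_{a_{1},\dots,a_{r},b_{1},\dots,b_{r}\le A}\sum_{t}\chi\!\big((t+a_{1})\cdots(t+a_{r})\,\overline{(t+b_{1})\cdots(t+b_{r})}\big)$, and here I would invoke Weil's bound, i.e.\ the Riemann Hypothesis for curves over finite fields: the complete inner sum is $O(r\sqrt p)$ unless the multisets $\{a_{i}\}$ and $\{b_{j}\}$ coincide, in which case the summand reduces to the principal character and contributes $\asymp p$. The number of such diagonal tuples is $\ll_{r}A^{r}$, so $\sum_{t}|W(t)|^{2r}\ll_{r}A^{r}p+A^{2r}\sqrt p$. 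Assembling the three factors, raising to the power $2r$, and recombining then yields a bound for $|S|$ in terms of $P$, $A$, $N$ and $p$.

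The final step, and the one demanding the most care, is optimizing the free parameters $P$ and $A$ against $r$ so that the amplification gain balances both the shifting error $O(PA)$ and the off-diagonal term $A^{2r}\sqrt p$; this balancing produces precisely the exponents $x^{1-1/r}$, $p^{(r+1)/(4r^{2})}$ and $(\log p)^{1/(2r)}$. I expect two principal obstacles. First is the honest verification that Weil's bound applies with the stated diagonal/off-diagonal dichotomy, including the partially degenerate tuples where only some $a_i$ coincide with some $b_j$; these must be shown to feed the $O(r\sqrt p)$ term rather than the main term. Second is extracting the clean logarithmic factor $(\log p)^{1/(2r)}$: the crude divisor bound on $\sum_t\nu(t)^{2}$ only delivers a $p^{\epsilon}$, and recovering the sharp power requires the finer counting of Burgess's original treatment, together with keeping track of how the constraints $PN<p$ and $r\ge2$ fix the admissible range of $x$. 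Both ingredients are classical but genuinely delicate, which is exactly why one cites Burgess rather than reproving the inequality here.
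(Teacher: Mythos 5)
The paper does not actually prove this lemma: it is quoted, in the form carrying the $(\log p)^{1/(2r)}$ factor, directly from Burgess's 1962 paper, so there is no internal argument to measure yours against. Your sketch is a faithful roadmap of the classical amplification proof --- the shift identity $\chi(\ell)\chi(\overline{\ell}m+a)=\chi(m+\ell a)$ with an $O(PA)$ boundary error, the three-exponent H\"older step (whose exponents $r/(r-1),\,2r,\,2r$ do sum to $1$, with $\nu|W|=\nu^{1-1/r}\cdot\nu^{1/r}\cdot|W|$), the mean values of $\nu$ and $\nu^{2}$ via the congruence $\ell_{2}m_{1}\equiv\ell_{1}m_{2}\pmod p$ rendered an integer equation by $PN<p$, and the Weil bound for the $2r$-th moment. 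Two places where the plan, as written, is not yet a proof. First, the diagonal/off-diagonal dichotomy is misstated: if $\chi$ has order $d$, the complete sum $\sum_{t}\chi\big((t+a_{1})\cdots(t+a_{r})\big)\overline{\chi}\big((t+b_{1})\cdots(t+b_{r})\big)$ fails to be $O(r\sqrt p)$ whenever the rational function $\prod(t+a_{i})/\prod(t+b_{j})$ is a $d$-th power, not only when the multisets $\{a_{i}\}$ and $\{b_{j}\}$ coincide; for small $d$ (notably the quadratic character, which this paper needs) there are many such tuples with non-matching multisets, and one must verify that their number is still $O_{r}(A^{r})$ uniformly over $d\mid p-1$. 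You gesture at this under ``partially degenerate tuples,'' but it is the step most likely to go wrong if done casually. Second, the amplification only operates for $x\le p^{1/2+1/(4r)}$ (forced by $PN<p$ and the balancing against $O(PA)$); the lemma as stated is for all $x$, so the large-$x$ range must be recovered from P\'olya--Vinogradov, and checking that the sharp $(\log p)^{1/(2r)}$ rather than a full $\log p$ survives every range is precisely the bookkeeping you defer. None of this is a wrong turn --- it is the standard argument --- but as submitted it is an annotated plan with acknowledged gaps, and for the purposes of this paper the right move is the one the authors make: cite Burgess.
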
 The next result, giving the contribution of the principal and the quadratic characters in (\ref{cheese}), is essentially contained in Lemma $1.3$ of \cite{Marc}.

\begin{Lem}\label{quadratic}
 Let $\chi$ be a Dirichlet character modulo $p$, $L(s,\chi)$ be the associated Dirichlet $L$- function and $\epsilon$ be a fixed positive number. Then we have
$$\displaystyle{\sum_{\substack{m\leq x \\ m\; \textrm{square-full}}} \chi(m)}=\begin{cases}
\displaystyle{\frac{L\left(3/2,\chi_0\right)}{\zeta(3)}\left(1+\frac{1}{p}+\frac{1}{p^2}\right)^{-1}x^{1/2}+  \frac{\zeta\left(2/3\right)}{\zeta(2)}\left(\frac{1-p^{-2/3}}{1+p^{-1}}\right)x^{1/3}}\\ \vspace{4mm}
\displaystyle{+O(x^{1/6+\epsilon}p^{\epsilon})} \mbox{ if $\chi=\chi_0$}, \\
\displaystyle{\frac{L\left(3/2,\chi_2\right)}{\zeta(3)}\left(1+\frac{1}{p}+\frac{1}{p^2}\right)^{-1}x^{1/2}}\\ \vspace{4mm}
\displaystyle{+ O\left(x^{1/4}(\log x)^{1/2}p^{3/32+\epsilon}\right)} \mbox{ if $\chi$ is a quadratic character}. 
\end{cases}
$$
\end{Lem}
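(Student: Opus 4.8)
The plan is to reduce the sum to Dirichlet $L$-functions via the parametrisation of square-full integers, extract the main terms as residues of a Perron integral, and bound the remaining contour integral using convexity (for $\chi_0$) and the subconvexity implied by Lemma~\ref{Burgess} (for $\chi_2$); the main-term extraction mirrors Lemma~1.3 of \cite{Marc}. First I would use that every square-full $n$ is uniquely $n=a^2b^3$ with $b$ squarefree, so $\chi(n)=\chi(a)^2\chi(b)^3$, and compute the resulting Euler product
$$\sum_{m\,\text{square-full}}\frac{\chi(m)}{m^s}=\prod_p\frac{1-\chi(p)^6p^{-6s}}{(1-\chi(p)^2p^{-2s})(1-\chi(p)^3p^{-3s})}=\frac{L(2s,\chi^2)\,L(3s,\chi^3)}{L(6s,\chi^6)}=:D_\chi(s).$$
For $\chi=\chi_0$ this is $\zeta(2s)\zeta(3s)/\zeta(6s)$ times the bounded finite factors $(1-p^{-2s})(1-p^{-3s})(1-p^{-6s})^{-1}$; for quadratic $\chi=\chi_2$ it is $\zeta(2s)(1-p^{-2s})L(3s,\chi_2)/\bigl(\zeta(6s)(1-p^{-6s})\bigr)$, since $\chi_2^2=\chi_2^6=\chi_0$ while $\chi_2^3=\chi_2$.

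Next I would apply a truncated Perron formula and move the line of integration to the left, collecting residues. The factor $\zeta(2s)$ contributes a simple pole at $s=1/2$ in both cases, and $\zeta(3s)$ a further pole at $s=1/3$ in the principal case; as $L(3s,\chi_2)$ is entire, the quadratic case has only the pole at $s=1/2$, which explains the absence of an $x^{1/3}$ term there. A direct residue computation yields the stated coefficients: at $s=1/2$ the residue of $\zeta(2s)$ is $1/2$, and after dividing by $s=1/2$ and using $L(3,\chi_0)=\zeta(3)(1-p^{-3})$ with $1-p^{-3}=(1-p^{-1})(1+p^{-1}+p^{-2})$, the factor $1-p^{-1}$ produced by $L(2s,\chi^2)=\zeta(2s)(1-p^{-2s})$ at $s=1/2$ cancels and leaves exactly $\tfrac{L(3/2,\chi)}{\zeta(3)}(1+p^{-1}+p^{-2})^{-1}x^{1/2}$; the $x^{1/3}$ term for $\chi_0$ emerges the same way from $s=1/3$, where $L(2/3,\chi_0)/L(2,\chi_0)$ supplies the factor $\tfrac{1-p^{-2/3}}{1+p^{-1}}$.

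It remains to bound the shifted integral, and here the two cases diverge. For $\chi_0$ I would push the contour to $\operatorname{Re}(s)=1/6+\epsilon$: there $\operatorname{Re}(6s)>1$, so $\zeta(6s)^{-1}$ and the finite Euler factors are $O(p^\epsilon)$, while $\zeta(2s),\zeta(3s)$ obey standard polynomial-in-$t$ bounds, giving the error $O(x^{1/6+\epsilon}p^\epsilon)$. For $\chi_2$ I would instead stop at $\operatorname{Re}(s)=1/4$, again chosen so that $\operatorname{Re}(6s)=3/2>1$ keeps $\zeta(6s)^{-1}$ harmless; the new ingredient is a bound for $L(3/4+it,\chi_2)$, which I would obtain by Phragm\'en--Lindel\"of interpolation between the Burgess bound $L(1/2+it,\chi_2)\ll p^{3/16+\epsilon}$ on the critical line (the $L$-function form of Lemma~\ref{Burgess} with $r=2$) and the trivial bound on $\operatorname{Re}=1$, which gives the subconvexity exponent $\tfrac38\bigl(1-\tfrac34\bigr)=\tfrac{3}{32}$. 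Feeding $L(3/4+it,\chi_2)\ll p^{3/32+\epsilon}(1+|t|)^\epsilon$ and $x^{1/4}$ into the truncated integral and optimising the truncation height produces $O\bigl(x^{1/4}(\log x)^{1/2}p^{3/32+\epsilon}\bigr)$.

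The main obstacle is this last step: securing the correct \emph{joint} dependence on $x$ and $p$. The delicate points are establishing the subconvex bound for $L(3/4+it,\chi_2)$ with the $3/32$ exponent, and controlling the horizontal segments together with the truncation error in Perron's formula so that the $t$-aspect growth of $\zeta(2s)$ and $L(3s,\chi_2)$ is absorbed into the clean factor $(\log x)^{1/2}$ rather than into a power of $x$. By contrast, once the identity for $D_\chi(s)$ is in hand the main-term computation and the principal-character estimate are routine.
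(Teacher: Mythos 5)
The paper does not actually prove this lemma: it is quoted as ``essentially contained in Lemma 1.3 of \cite{Marc}'', so the comparison here is with the method of that reference. Your opening steps are correct and match the source: the identity $\sum_{m\,\text{sq-full}}\chi(m)m^{-s}=L(2s,\chi^2)L(3s,\chi^3)/L(6s,\chi^6)$ is right, and your residue computations at $s=1/2$ and $s=1/3$ reproduce the stated main terms exactly, including the factors $(1+p^{-1}+p^{-2})^{-1}$ and $(1-p^{-2/3})/(1+p^{-1})$. The Phragm\'en--Lindel\"of interpolation of Burgess giving $L(3/4+it,\chi_2)\ll p^{3/32+\epsilon}$ is also correct, and it is no accident that $3/32$ is the right exponent.

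The genuine gap is exactly where you flagged it, and it cannot be repaired along the lines you propose: the truncated Perron formula does not yield the stated error terms. On the line $\operatorname{Re}(s)=1/6+\epsilon$ the integrand contains $\zeta(1/3+2it)\zeta(1/2+3it)/|s|$, whose integral up to height $T$ grows like a positive power of $T$ (even using second-moment bounds and Cauchy--Schwarz one gets at best about $T^{1/6}$), while the truncation error in Perron's formula is of size roughly $x^{1/2+\epsilon}/T$ because the Dirichlet series only converges for $\operatorname{Re}(s)>1/2$. Balancing forces $T$ to be a fixed positive power of $x$ and leaves an error $x^{1/6+\delta}$ for some fixed $\delta>0$, not $x^{1/6+\epsilon}$; the same obstruction in the quadratic case yields $x^{1/4+\delta}p^{3/32}$ rather than $x^{1/4}(\log x)^{1/2}p^{3/32+\epsilon}$, and a clean $(\log x)^{1/2}$ is simply not attainable this way (smoothing does not help either, since unsmoothing costs $\delta x^{1/2}$ from square-full integers near $x$). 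The proof in \cite{Marc} is elementary: write $m=a^2b^3$ with $b$ square-free and apply the hyperbola method, splitting at $b\le B$. For $b\le B$ the inner sum $\sum_{a\le(x/b^3)^{1/2}}\chi_0(a)$ contributes the main term plus $O(B)$, the tail of the completed sum $\sum_{b>B}\mu^2(b)\chi_2(b)b^{-3/2}$ is handled by partial summation against the Burgess bound $\sum_{b\le y}\mu^2(b)\chi_2(b)\ll y^{1/2}p^{3/16}(\log p)^{1/4}$ (as in Lemma \ref{sqrfree}), and balancing $B$ against $x^{1/2}B^{-1}p^{3/16}$ gives $B=x^{1/4}p^{3/32}(\log p)^{1/8}$ and hence the stated error; the principal case with error $x^{1/6+\epsilon}$ likewise goes back to the elementary convolution argument of Bateman--Grosswald rather than a contour shift. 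So you have the right main terms and the right numerology, but the analytic route you chose for the error terms fails, and the elementary convolution argument is needed.
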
 These terms will in fact give the main contribution in (\ref{cheese}). We bound the contribution of higher order characters in the following result.

\begin{Lem}\label{highercharac}
Suppose that $\chi^2 \neq \chi_0$ is a Dirichlet character modulo $p$. Then, we have

\begin{equation*}
\sum_{\substack{m\leq x \\ m\; \textrm{square-full}}} \chi(m) \ll x^{1/3}p^{1/9}(\log p)^{1/6}(\log x).  \end{equation*}

\end{Lem}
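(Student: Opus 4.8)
The plan is to exploit the standard parametrization of square-full numbers: every square-full integer $m$ can be written \emph{uniquely} as $m = a^2 b^3$ with $b$ square-free. Substituting this and using that $\chi$ is completely multiplicative, the summand becomes $\chi(a^2 b^3) = \chi(a)^2 \chi(b)^3 = \chi^2(a)\,\chi^3(b)$, so the sum over square-full $m \le x$ turns into a double sum over pairs $(a,b)$ with $b$ square-free and $a^2 b^3 \le x$. This reduces the problem to estimating an ordinary character sum in one variable after fixing the other.

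First I would fix the square-free variable $b$ on the outside and treat the sum over $a$ as the inner sum, running over $a \le (x/b^3)^{1/2}$ (which forces $b \le x^{1/3}$). The crucial point is that the inner summand is $\chi^2(a)$, and by hypothesis $\chi^2 \neq \chi_0$, so $\chi^2$ is a \emph{non-principal} character modulo $p$ and the Burgess inequality of Lemma \ref{Burgess} applies to $\sum_{a \le (x/b^3)^{1/2}} \chi^2(a)$. Note that placing $b$ on the inside would be the wrong choice: the governing character would then be $\chi^3$, which becomes principal exactly when $\chi$ has order three, whereas the hypothesis $\chi^2 \neq \chi_0$ is tailored precisely to the sum over $a$.

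The exponents then fall out of a single choice of parameter in Burgess. Taking $r = 3$ gives $p^{(r+1)/(4r^2)} = p^{1/9}$ and $(\log p)^{1/(2r)} = (\log p)^{1/6}$, so with $Y = (x/b^3)^{1/2}$ the inner sum is $\ll Y^{2/3} p^{1/9} (\log p)^{1/6}$. Since $Y^{2/3} = x^{1/3} b^{-1}$, the inner sum contributes $\ll x^{1/3} b^{-1} p^{1/9} (\log p)^{1/6}$. Summing this over $b \le x^{1/3}$ leaves the harmonic-type sum $\sum_{b \le x^{1/3}} b^{-1} \ll \log x$, which supplies exactly the single logarithmic factor in the claim; collecting everything yields the stated bound $x^{1/3} p^{1/9} (\log p)^{1/6} (\log x)$.

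The argument is essentially routine once the parametrization is in place, and I do not expect a substantive obstacle. The two points requiring care are: (i) confirming that $\chi^2$ is genuinely non-principal so that Burgess may legitimately be invoked on the inner sum --- this is guaranteed by the hypothesis $\chi^2 \neq \chi_0$; and (ii) checking that the single choice $r = 3$ reproduces precisely the exponents $1/9$ and $1/6$, so that no optimization over $r$ is needed. The only bookkeeping is the correct handling of the summation ranges, namely that the inner sum runs over $a \le (x/b^3)^{1/2}$ and hence that $b$ is confined to $b \le x^{1/3}$, which is what converts the outer sum into the benign harmonic sum above.
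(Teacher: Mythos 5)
Your proposal is correct and follows essentially the same route as the paper: the unique decomposition $m = a^2 b^3$ with $b$ square-free, Burgess with $r=3$ applied to the inner sum over $a$ (legitimate since $\chi^2 \neq \chi_0$), and the harmonic sum over $b \le x^{1/3}$ supplying the $\log x$ factor. No gaps.
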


\begin{proof}Each square-full integer $m$ can be written in a unique way $m = a^2b^3$ with $b$ square-free. Thus we have
\begin{equation*}
 \sum_{\substack{m\leq x \\ m\; \textrm{square-full}}} \chi(m) = \sum_{\substack{a^{2} b^{3} \leq x\\ b \; \textrm{square-free}}} \chi(a^{2} b^{3})  =  \sum_{b\leq x^{1/3}} \mu^{2}(b) \chi^{3}(b) \sum_{a \leq (\frac{x}{b^{3}})^{1/2}} \chi^{2}(a)
 \end{equation*} We now apply the Burgess inequality (Lemma \ref{Burgess}) with $r=3$ to the final inner sum --- this is fine since $\chi^{2} \neq \chi_{0}$. We have 
 
 $$\sum_{\substack{m\leq x \\ m\; \textrm{square-full}}} \chi(m) \ll x^{1/3}p^{1/9}(\log p)^{1/6} \sum_{b\leq x^{1/3}}\frac{1}{b} \ll x^{1/3}p^{1/9}(\log p)^{1/6} (\log x), $$ which concludes the proof.

\end{proof}


\subsection{Proof of Theorem \ref{lower}} 
For the final sum in (\ref{cheese}) we bound the inner sum using Lemma \ref{highercharac}, then, using the triangle inequality and the fact that $\Gamma_{d}$ denotes the  $\phi(d)$ characters of order $d$ we obtain
\begin{equation*}
\left|\sum_{\substack{d|(p-1)\\ d>2}} \frac{\mu(d)}{\phi(d)} \sum_{\chi \in \Gamma_{d}} \sum_{\substack{m\leq x \\ m\; \textrm{square-full}}} \chi(m)\right| \ll 2^{\omega(p-1)}x^{1/3}p^{1/9}(\log p)^{1/6}(\log x).
\end{equation*}
Regrouping the terms of size $x^{1/2}$, using Lemma \ref{quadratic}, and noting that the error terms appearing here are smaller than the ones encountered above, we end up with the expected asymptotic formula in Theorem \ref{lower} where 

$$C_p = \sum_{n\geq 1} \frac{\chi_0(n)-\chi_2(n)}{n^{3/2}} = 2\sum_{\substack{n\geq 1 \\ \chi_2(n)=-1}}\frac{1}{n^{3/2}},$$
in which $\chi_{2}$ denotes the quadratic character modulo $p$.

As pointed out in the final section of \cite{Banks} (precisely Theorem $4.1$), using Hildebrand's extension of the Burgess inequality (see \cite{Hildebrand}) and the work of Granville and Soundararajan \cite{GranvilleSound} on completely multiplicative functions, there exists a positive proportion (greater than 34 \%) of integers less than $p^{\frac{1}{4\sqrt{e}}}$ that are quadratic non-residues modulo $p$. Thus, 

$$C_p \gg \sum_{\substack{p^{\frac{1}{4\sqrt{e}}} \leq n\leq 2p^{\frac{1}{4\sqrt{e}}}\\ (n|p) = -1}} \frac{1}{n^{3/2}} \gg p^{-\frac{1}{8\sqrt{e}}},$$ which concludes the proof.

  \section{Conditional results}\label{ketchup}
We assume the Generalized Riemann Hypothesis (GRH) throughout this section in order to pursue sharper bounds on $g_{\blacksquare}(p)$. We note that the Burgess bound $g(p) \ll p^{1/4 +\epsilon}$ has been improved under GRH by many authors, the strongest result to date being by Shoup \cite{MR1106981}, who showed that $g(p) \ll (\log p)^{6+ \epsilon}$. 
 
We shall focus on a subset of the square-full numbers, namely those numbers of the type $n=q^2r^3$ where $q$ and $r$ are primes. We denote this set by $\mathcal{S}$ and let $N_{\mathcal{S}}(x)$ be the number of elements in $\mathcal{S}$ which are primitive roots modulo $p$ and which do not exceed $x$. We prove

 \begin{thm}\label{thmGRH} 
 Assuming the Generalized Riemann Hypothesis we have $$ N_{\mathcal{S}}(x)(p, x) =\frac{\phi(p-1)}{p-1} \left\{\sum_{\substack{r\leq x^{1/3} \\ r \; \textrm{prime}}}(\chi_0(r)-\chi_2(r)) Li\left(\frac{x^{1/2}}{r^{3/2}}\right)+ O(2^{\omega(p-1)}x^{1/3}\log^2 (px))\right\}, $$ where $\textrm{Li}(x)$ is the logarithmic integral.
\end{thm}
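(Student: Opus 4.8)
The plan is to detect primitive roots by the standard orthogonality identity: for $\gcd(n,p)=1$ the characteristic function of primitive roots modulo $p$ is
$$\frac{\phi(p-1)}{p-1} \sum_{d \mid (p-1)} \frac{\mu(d)}{\phi(d)} \sum_{\chi \in \Gamma_d} \chi(n),$$
and it vanishes automatically when $p \mid n$ since then every $\chi(n)=0$. Applying this to each $n=q^2r^3\in\mathcal S$ with $n\le x$, using that the map $(q,r)\mapsto q^2r^3$ is injective by unique factorisation, and factoring the inner character sum multiplicatively gives
$$N_{\mathcal S}(x)=\frac{\phi(p-1)}{p-1}\sum_{d\mid(p-1)}\frac{\mu(d)}{\phi(d)}\sum_{\chi\in\Gamma_d}\sum_{\substack{r\le x^{1/3}\\ r\ \textrm{prime}}}\chi^3(r)\sum_{\substack{q\le(x/r^3)^{1/2}\\ q\ \textrm{prime}}}\chi^2(q).$$
I would then split off the low-order characters $d\in\{1,2\}$, which yield the main term, from those of order $d>2$, whose contribution is absorbed into the error term.

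For $d=1,2$ one has $\chi^2=\chi_0$, so the inner sum over $q$ is a genuine prime count $\sum_{q\le y}\chi_0(q)=\pi(y)+O(1)$ with $y=x^{1/2}/r^{3/2}$, while $\chi^3=\chi$ gives the outer factor $\chi_0(r)$ when $d=1$ and, because $\tfrac{\mu(2)}{\phi(2)}=-1$, the factor $-\chi_2(r)$ when $d=2$; together these reproduce $\chi_0(r)-\chi_2(r)$. Invoking the Riemann Hypothesis in the form $\pi(y)=\mathrm{Li}(y)+O(\sqrt y\log y)$ replaces each count by $\mathrm{Li}(x^{1/2}/r^{3/2})$ with an error $O((x/r^3)^{1/4}\log x)$; summing this against $|\chi_0(r)-\chi_2(r)|\le 2$ over $r\le x^{1/3}$ produces $O\!\big(x^{1/4}\log x\sum_{r\le x^{1/3}}r^{-3/4}\big)=O(x^{1/3}\log x)$, comfortably inside the claimed error.

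For $d>2$ we have $\chi^2\ne\chi_0$, so $\sum_{q\le y}\chi^2(q)$ is a nonprincipal prime character sum. Under GRH the explicit bound $\psi(y,\chi^2)\ll\sqrt y\,\log^2(py)$ together with partial summation yields $\sum_{q\le y}\chi^2(q)\ll\sqrt y\,\log^2(py)\ll(x/r^3)^{1/4}\log^2(px)$ uniformly for $y\le x^{1/2}$. Summing $r^{-3/4}$ over $r\le x^{1/3}$ as before contributes $x^{1/3}\log^2(px)$, and estimating trivially over the $\phi(d)$ characters of each $\Gamma_d$ and over the at most $2^{\omega(p-1)}$ squarefree divisors $d>2$ of $p-1$ gives the total error $O(2^{\omega(p-1)}x^{1/3}\log^2(px))$. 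Combining the two contributions produces the stated formula. The main obstacle is the uniform-in-$p$ use of GRH for these short prime character sums: I must ensure that the conditional estimate for $\psi(y,\chi)$ and the subsequent partial summation remain valid down to small $y$, i.e.\ for $r$ near $x^{1/3}$ where the $q$-range collapses, and that the singularity of $\mathrm{Li}$ near $1$ contributes only $O(1)$ per term, so that extending the $r$-sum to the full range $r\le x^{1/3}$ in the main term is harmless.
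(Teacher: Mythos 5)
Your proposal is correct and follows essentially the same route as the paper: the same orthogonality decomposition over characters, the same factorisation $m=q^2r^3$ splitting the sum into an $r$-sum over primes up to $x^{1/3}$ and a $q$-sum treated by the GRH prime number theorem (for $\chi^2=\chi_0$) or the GRH bound $\sum_{q\le y}\chi^2(q)\ll \sqrt{y}\log^2(py)$ (for $\chi^2\ne\chi_0$), with the identical $\sum_{r\le x^{1/3}}r^{-3/4}\ll x^{1/12}$ bookkeeping. The edge cases you flag (small $q$-ranges, the singularity of $\mathrm{Li}$) are genuine but harmless, and the paper does not dwell on them either.
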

To prove Theorem \ref{thmGRH} we require some preparatory lemmas. We first estimate the contribution of the principal and quadratic characters in Theorem \ref{thmGRH}.

 \begin{Lem}\label{quadraticGRH}
 Let $\chi$ be a Dirichlet character modulo $p$ such that $\chi^2 = \chi_0$. Then assuming GRH we have
$$\displaystyle{\sum_{\substack{m\in \mathcal{S} \\ m\leq x}} \chi(m)}=
\displaystyle{\sum_{\substack{r\leq x^{1/3}\\ r \; \textrm{prime}}}  \chi(r) Li\left(\frac{x^{1/2}}{r^{3/2}}\right)} + \displaystyle{O\left(x^{1/3}\log x\right)}.\\
$$

\end{Lem}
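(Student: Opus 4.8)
The plan is to parametrise the elements of $\mathcal{S}$ explicitly and exploit the hypothesis $\chi^2 = \chi_0$ to collapse the character values. Each $m \in \mathcal{S}$ is uniquely of the form $m = q^2 r^3$ with $q, r$ primes (the prime carrying exponent $2$ is $q$, the prime carrying exponent $3$ is $r$; the case $q=r$ gives $m=q^5$ and is covered exactly once). By complete multiplicativity $\chi(m) = \chi(q)^2 \chi(r)^3$, and since $\chi^2 = \chi_0$ we have $\chi(q)^2 = \chi_0(q)$ and $\chi(r)^3 = \chi_0(r)\chi(r) = \chi(r)$. Thus the terms with $q=p$ or $r=p$ vanish, and writing the condition $m \le x$ as $q \le x^{1/2}/r^{3/2}$ with $r \le x^{1/3}$, I would first obtain
\[
\sum_{\substack{m \in \mathcal{S} \\ m \le x}} \chi(m) = \sum_{\substack{r \le x^{1/3} \\ r \text{ prime}}} \chi(r) \sum_{\substack{q \le x^{1/2}/r^{3/2} \\ q \text{ prime}}} \chi_0(q) = \sum_{\substack{r \le x^{1/3} \\ r \text{ prime}}} \chi(r) \Big( \pi\big(x^{1/2}/r^{3/2}\big) + O(1) \Big),
\]
where the $O(1)$ accounts for the possible omission of $q=p$. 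Summing this over $r$ costs only $O(\pi(x^{1/3})) = O(x^{1/3})$.

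The second step is to invoke the prime-counting estimate valid under the Riemann Hypothesis (which is implied by GRH), namely $\pi(y) = \mathrm{Li}(y) + O(y^{1/2} \log y)$. Note that no character twist enters the inner sum, since $\chi(q)^2 = \chi_0(q)$ is principal, so only RH for $\zeta$ is needed here. Replacing $\pi(x^{1/2}/r^{3/2})$ by $\mathrm{Li}(x^{1/2}/r^{3/2})$ produces exactly the main term of the statement, together with an error
\[
\sum_{\substack{r \le x^{1/3} \\ r \text{ prime}}} |\chi(r)| \, O\!\left( \Big(\frac{x^{1/2}}{r^{3/2}}\Big)^{1/2} \log x \right) \ll x^{1/4} (\log x) \sum_{r \le x^{1/3}} \frac{1}{r^{3/4}}.
\]
Since $\sum_{n \le x^{1/3}} n^{-3/4} \ll (x^{1/3})^{1/4} = x^{1/12}$, this error is $\ll x^{1/4 + 1/12} \log x = x^{1/3} \log x$, which is precisely the claimed bound.

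The only genuinely delicate point, which I expect to be the main obstacle, is the boundary: the estimate $\pi(y)=\mathrm{Li}(y)+O(\sqrt y\log y)$ is meaningful for $y \ge 2$, whereas for $x^{1/2}/r^{3/2} < 2$ (that is, $r$ within a constant factor of $x^{1/3}$) one has $\pi(x^{1/2}/r^{3/2}) = 0$ while $\mathrm{Li}(x^{1/2}/r^{3/2})$ need not vanish and in fact diverges logarithmically as its argument tends to $1$. This is mild, however: the singularity of $\mathrm{Li}$ is integrable, so comparing the sum over these $r$ with the integral $\int |\mathrm{Li}(x^{1/2}/r^{3/2})| \, dr/\log r$ and substituting $r = x^{1/3}(1-s)$ near $r = x^{1/3}$ shows that their total contribution is $O(x^{1/3})$, again absorbed into the error term. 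Collecting the main term together with all error contributions then yields the lemma.
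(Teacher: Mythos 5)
Your proposal is correct and follows essentially the same route as the paper: decompose $m=q^2r^3$, use $\chi^2=\chi_0$ to reduce the inner sum to $\sum_{q}\chi_0(q)$, apply the RH-conditional prime number theorem, and bound the accumulated error by $x^{1/4}\log x\sum_{r\leq x^{1/3}}r^{-3/4}\ll x^{1/3}\log x$. The extra care you take with the $q=p$ term and the behaviour of $\mathrm{Li}$ near the boundary $r\approx x^{1/3}$ is a refinement the paper passes over silently, but it does not change the argument.
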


 \begin{proof}  
 We have
\begin{equation*}
 \sum_{\substack{m\in \mathcal{S} \\ m\leq x}} \chi(m) = \sum_{\substack{q^{2} r^{3} \leq x\\ q,r \; \textrm{primes}}} \chi(q^{2} r^{3})  =  \sum_{\substack{r\leq x^{1/3}\\ r \; \textrm{prime}}}  \chi(r) \sum_{q \leq (\frac{x}{r^{3}})^{1/2}} \chi_0(q).
 \end{equation*} We now apply the GRH version of the Prime Number Theorem (see \cite[Chapter $18$]{Davenport}) to get
 
 $$\sum_{\substack{r\leq x^{1/3} \\ r \; \textrm{prime}}} \chi(r) \textrm{Li}\left(\frac{x^{1/2}}{r^{3/2}}\right) + O\left(x^{1/4}\log x\sum_{r\leq x^{1/3}}\frac{1}{r^{3/4}} \right) = \sum_{\substack{r\leq x^{1/3} \\ r \; \textrm{prime}}}  \chi(r) \textrm{Li}\left(\frac{x^{1/2}}{r^{3/2}}\right) + O\left(x^{1/3}\log x\right).$$

 \end{proof}

  We will use the following result (see, e.g.\ \cite[Chapter $20$]{Davenport}) in order to bound the contribution of the higher order characters.
 \begin{Lem}\label{GRHbound} Suppose that $\chi$ is a non-principal character modulo $p$. Then, under GRH we have the following bound 
 
$$ \sum_{\substack{q\leq x \\ q \; \textrm{prime}}} \chi(q) \ll \sqrt{x}\log^2 (px).  $$ 
   \end{Lem}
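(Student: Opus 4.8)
The plan is to reduce the prime sum to a Chebyshev-type weighted sum and then to apply the conditional explicit formula for $L(s,\chi)$. First I would introduce $\psi(x,\chi) = \sum_{n\leq x} \Lambda(n)\chi(n)$, where $\Lambda$ is the von Mangoldt function, since this is the object directly controlled by the zeros of $L(s,\chi)$. Using Perron's formula and shifting the contour past the line $\Re(s)=0$, one obtains the truncated explicit formula
$$\psi(x,\chi) = -\sum_{|\gamma|\leq T} \frac{x^{\rho}}{\rho} + O\left(\frac{x\log^2(pxT)}{T}+\log x\right),$$
where $\rho = \beta + i\gamma$ runs over the non-trivial zeros of $L(s,\chi)$ and $T$ is a truncation height to be chosen.

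The heart of the argument is to bound the sum over zeros. Invoking GRH gives $\beta = 1/2$ for every non-trivial zero, so $|x^{\rho}/\rho| = \sqrt{x}/|\rho| \leq \sqrt{x}/|\gamma|$. The number of zeros with ordinate in $[n,n+1]$ is $\ll \log(p(|n|+2))$, so partial summation yields $\sum_{0<\gamma\leq T} 1/\gamma \ll \log^2(pT)$, and hence $\sum_{|\gamma|\leq T} |x^{\rho}/\rho| \ll \sqrt{x}\log^2(pT)$. Choosing $T \asymp x$ then makes the Perron error term $O(\log^2(px))$, which is absorbed, and we conclude that $\psi(x,\chi)\ll \sqrt{x}\log^2(px)$.

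It remains to strip the weights. The prime powers $q^k$ with $k\geq 2$ contribute a total of $O(\sqrt{x})$ to $\psi$, so the same bound holds for $\theta(x,\chi) = \sum_{q\leq x}\chi(q)\log q$. A final application of Abel summation,
$$\sum_{q\leq x}\chi(q) = \frac{\theta(x,\chi)}{\log x} + \int_2^x \frac{\theta(t,\chi)}{t\log^2 t}\,dt,$$
together with the bound on $\theta(t,\chi)$ and the elementary estimate $\int_2^x (\sqrt{t}\log^2 t)^{-1}\,dt \ll \sqrt{x}/\log^2 x$, gives the claimed $\sum_{q\leq x}\chi(q)\ll \sqrt{x}\log^2(px)$.

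I expect the main obstacle to be keeping every error term uniform in the modulus $p$: the explicit formula, the zero-count estimate, and the optimization of the truncation height $T$ must all be carried out with the $\log p$ dependence made explicit, since it is precisely the joint $\log^2(px)$ dependence — rather than the $\sqrt{x}$ savings from GRH, which is immediate — that must be tracked throughout. This is exactly the content of the standard treatment in \cite[Chapter 20]{Davenport}.
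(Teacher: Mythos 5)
Your proof is correct and is essentially the argument the paper relies on: the paper gives no proof of this lemma, citing only \cite[Chapter 20]{Davenport}, and your route through the explicit formula for $\psi(x,\chi)$, the GRH bound on zeros, the zero-counting estimate, truncation at $T\asymp x$, and partial summation back to $\pi(x,\chi)$ is exactly that standard treatment. The only point to tidy is the step $|x^{\rho}/\rho|\leq\sqrt{x}/|\gamma|$: for the $O(\log p)$ zeros with $|\gamma|\leq 1$ you should instead use $|\rho|\geq\beta=1/2$, which contributes $\ll\sqrt{x}\log p$ and is absorbed into $\sqrt{x}\log^{2}(px)$, so the conclusion stands.
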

   The last lemma we need is a conditional analogue of Lemma \ref{highercharac}.

\begin{Lem}\label{highercharacgrh}
Suppose that $\chi^2 \neq \chi_0$ is a Dirichlet character modulo $p$. Then, under GRH we have the following bound 

\begin{equation*}
\sum_{\substack{m\in \mathcal{S} \\ m\leq x}} \chi(m) \ll x^{1/3}\log^2 (px).  \end{equation*}

\end{Lem}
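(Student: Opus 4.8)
The plan is to mimic the factorization used in the proof of Lemma \ref{quadraticGRH}, but to capitalize on the hypothesis $\chi^2 \neq \chi_0$ by feeding the inner sum into the conditional prime-sum bound of Lemma \ref{GRHbound} rather than the prime number theorem. First I would write each $m \in \mathcal{S}$ uniquely as $m = q^2 r^3$ with $q,r$ prime and split the sum according to the value of $r$, so that
$$\sum_{\substack{m\in \mathcal{S} \\ m\leq x}} \chi(m) = \sum_{\substack{q^2 r^3 \leq x \\ q,r \; \textrm{primes}}} \chi(q^2 r^3) = \sum_{\substack{r\leq x^{1/3}\\ r \; \textrm{prime}}} \chi^{3}(r) \sum_{\substack{q \leq (x/r^{3})^{1/2}\\ q \; \textrm{prime}}} \chi^{2}(q).$$

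Since $\chi^2 \neq \chi_0$, the character $\chi^2$ is non-principal, and hence Lemma \ref{GRHbound} applies directly to the inner sum over $q$, giving
$$\sum_{\substack{q \leq (x/r^{3})^{1/2}\\ q \; \textrm{prime}}} \chi^{2}(q) \ll \left(\frac{x}{r^{3}}\right)^{1/4} \log^{2}\!\left(p\left(\frac{x}{r^{3}}\right)^{1/2}\right) \ll \frac{x^{1/4}}{r^{3/4}}\,\log^{2}(px),$$
where in the last step I bound the logarithm uniformly using $(x/r^3)^{1/2} \le x$ throughout the range $r \le x^{1/3}$. The entire saving thus comes from the inner sum; I would then apply the triangle inequality to the outer sum, so that $|\chi^{3}(r)| \le 1$ and no cancellation in $r$ is required.

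Summing the resulting bound over $r$ gives
$$\sum_{\substack{m\in \mathcal{S} \\ m\leq x}} \chi(m) \ll x^{1/4}\log^{2}(px) \sum_{r\leq x^{1/3}} \frac{1}{r^{3/4}} \ll x^{1/4}\log^{2}(px)\cdot x^{1/12} = x^{1/3}\log^{2}(px),$$
where I used $\sum_{n \leq y} n^{-3/4} \ll y^{1/4}$ with $y = x^{1/3}$, together with the identity $\tfrac14 + \tfrac{1}{12} = \tfrac13$. This is precisely the claimed estimate.

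The argument is essentially routine once the factorization is in place, and there is no obstacle of real substance. The only point genuinely requiring care is the verification that Lemma \ref{GRHbound} is legitimately applicable: this hinges on $\chi^2$ being non-principal, which is exactly guaranteed by the standing hypothesis $\chi^2 \neq \chi_0$. Everything else is bookkeeping, namely replacing the variable logarithmic factor by the uniform $\log^2(px)$ and tracking that the exponents of $x$ combine to give $1/3$.
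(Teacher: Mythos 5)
Your proposal is correct and follows essentially the same route as the paper: the identical factorization $m=q^2r^3$ giving the double sum $\sum_{r\leq x^{1/3}}\chi^3(r)\sum_{q\leq (x/r^3)^{1/2}}\chi^2(q)$, the application of Lemma \ref{GRHbound} to the inner sum (valid precisely because $\chi^2\neq\chi_0$), and the final summation $\sum_{r\leq x^{1/3}}r^{-3/4}\ll x^{1/12}$ yielding $x^{1/3}\log^2(px)$. You spell out the uniform bounding of the logarithm and the exponent arithmetic slightly more explicitly than the paper does, but there is no substantive difference.
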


\begin{proof}
We have
\begin{equation*}
 \sum_{\substack{m\in \mathcal{S} \\ m\leq x}} \chi(m) = \sum_{\substack{q^{2} r^{3} \leq x\\ q,r \; \textrm{primes}}} \chi(q^{2} r^{3})  =  \sum_{\substack{r\leq x^{1/3} \\ r \; \textrm{prime}}}  \chi^{3}(r) \sum_{q \leq (\frac{x}{r^{3}})^{1/2}} \chi^{2}(q)
 \end{equation*} We now apply Lemma \ref{GRHbound} to the final inner sum and we get
 
 
 $$\sum_{\substack{m\in \mathcal{S} \\ m\leq x}} \chi(m) \ll x^{1/4}\log^2 (px) \sum_{r\leq x^{1/3}}\frac{1}{r^{3/4}} \ll x^{1/3}\log^2 (px)$$ which concludes the proof.

\end{proof}

 \subsection{Proof of Theorem \ref{thmGRH}}
 
Analogously to (\ref{cheese}), we have 
\begin{equation*}\label{numberS}
 N_{\mathcal{S}}(x)(p, x) = \frac{\phi(p-1)}{p-1} \left\{  \sum_{\substack{m\in \mathcal{S} \\ m\leq x}} \chi_0(m) -  \sum_{\substack{m\in \mathcal{S} \\ m\leq x}} \chi_2(m) + \sum_{\substack{d|(p-1)\\ d>2}} \frac{\mu(d)}{\phi(d)} \sum_{\chi \in \Gamma_{d}} \sum_{\substack{m\in \mathcal{S} \\ m\leq x}}  \chi(m)    \right\}.
\end{equation*} As in the proof of Theorem \ref{lower}, we have, using Lemma \ref{highercharacgrh},

\begin{equation*}
\sum_{\substack{d|(p-1)\\ d>2}} \frac{\mu(d)}{\phi(d)} \sum_{\chi \in \Gamma_{d}} \sum_{\substack{m\in \mathcal{S} \\ m\leq x}}  \chi(m)\ll 2^{\omega(p-1)}\log^2 (px).
\end{equation*} Grouping the terms of size $x^{1/2}$, using Lemma \ref{quadraticGRH}, and remarking that the error terms are smaller than the ones encountered above concludes the proof. 

\vspace{2mm}

We now deduce the following bound, which is a conditional improvement to Corollary~\ref{leastfull}.
 \begin{cor} Assuming GRH, we have $g_{\blacksquare}(p) \ll 2^{6\omega(p-1)}\omega(p-1)(\log p)^{18} \log\log p.$   \end{cor}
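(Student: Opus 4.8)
The plan is to locate, as a function of $p$, a value $x$ of the size claimed for which Theorem~\ref{thmGRH} forces $N_{\mathcal S}(x)\geq 1$. Every element of $\mathcal S$ is square-full, so a primitive root lying in $\mathcal S$ is in particular a square-full primitive root; hence $g_{\blacksquare}(p)$ does not exceed the least primitive root in $\mathcal S$, and it suffices to make the counting function in Theorem~\ref{thmGRH} positive.

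First I would unwind the main term. For a prime $r\neq p$ one has $\chi_0(r)-\chi_2(r)=2$ when $r$ is a quadratic non-residue and $0$ otherwise, while the term $r=p$ vanishes. Thus the main term of Theorem~\ref{thmGRH} equals $2\sum_{r\leq x^{1/3},\,\chi_2(r)=-1}\textrm{Li}\!\left(x^{1/2}/r^{3/2}\right)\geq 0$, the sum running over primes. Since $\phi(p-1)/(p-1)>0$, to obtain $N_{\mathcal S}(x)>0$ it is enough to show that this non-negative sum dominates the error term $O\!\left(2^{\omega(p-1)}x^{1/3}\log^2(px)\right)$.

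The heart of the matter is a lower bound for the main sum, which is controlled by its smallest non-residue primes. Under GRH two inputs are available: the least quadratic non-residue (automatically prime) satisfies $r_0\ll(\log p)^2$, and, by Lemma~\ref{GRHbound}, $\sum_{r\leq t}\chi_2(r)\ll\sqrt{t}\,\log^2(pt)$, so that at a suitable scale (a fixed power of $\log p$) a positive proportion of the primes are non-residues. Retaining these terms and using $\textrm{Li}(y)\gg y/\log y$ produces a lower bound of the shape $x^{1/2}\big/\big((\log p)^{c}(\log x)\big)$ for an explicit constant $c$. Weighing this against the error term reduces everything to an inequality of the form $x^{1/6}\gg 2^{\omega(p-1)}(\log p)^{c'}(\log x)\log^2(px)$; since the admissible $x$ satisfies $\log x\ll\omega(p-1)+\log\log p$ and $\log(px)\ll\log p$, solving it (raise to the sixth power, absorb the lower-order factors) gives an $x$ of the form stated in the corollary, for which $N_{\mathcal S}(x)\geq 1$. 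This mirrors the deduction of Corollary~\ref{leastfull} from Theorem~\ref{lower}, the conditional bound on small non-residues playing here the role of $C_p\gg p^{-1/(8\sqrt e)}$.

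The main obstacle is exactly this balance between the main sum and the error term: the latter carries the factor $2^{\omega(p-1)}$ acquired by summing Lemma~\ref{highercharacgrh} over the $\phi(d)$ characters of every order $d\mid(p-1)$, and a genuinely positive main term must outweigh it. Because the size of the main sum is dictated entirely by how small, and how numerous, the least non-residue primes are, the final power of $\log p$ rests squarely on the GRH estimates for those primes; sharpening the lower bound (for example by exploiting the full dyadic block of non-residue primes rather than $r_0$ alone) feeds directly into the exponent. One routine point to verify en route is that the argument $x^{1/2}/r^{3/2}$ of the logarithmic integral remains large for the retained primes, so that the approximation $\textrm{Li}(y)\asymp y/\log y$ is valid.
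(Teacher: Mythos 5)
Your proposal follows the paper's own argument: both lower-bound the main term of Theorem \ref{thmGRH} by restricting to a dyadic block of quadratic non-residue primes at scale a fixed power of $\log p$, apply $\textrm{Li}(y)\gg y/\log y$, and balance the resulting $x^{1/2}/(\log x\log p)$ against the error term $2^{\omega(p-1)}x^{1/3}\log^2(px)$. The one quantitative caveat is that the paper gets its positive proportion of non-residue primes below $C(\log p)^2$ from Ankeny's theorem, whereas deducing it from Lemma \ref{GRHbound} alone only works at scale $(\log p)^{4+\epsilon}$ and would produce a power of $\log p$ larger than the stated $18$; with Ankeny's input your computation recovers the corollary as stated.
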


 \begin{proof} We have $$\sum_{\substack{r\leq x^{1/3} \\ r \; \textrm{prime}}}(\chi_0(r)-\chi_2(r)) \textrm{Li}\left(\frac{x^{1/2}}{r^{3/2}}\right) = 2\sum_{\substack{r\leq x^{1/3} \\ r \; \textrm{prime and } (r|p) = -1}}\textrm{Li}\left(\frac{x^{1/2}}{r^{3/2}}\right).$$ As a consequence of the work of Ankeny \cite{Ankeny}, we deduce the existence of a positive proportion of primes $r$ less than $C\log^2 p$ which are quadratic non-residues modulo $p$, for some explicit constant $C>0$. Thus, 

$$\sum_{\substack{r\leq x^{1/3} \\ r \; \textrm{prime and } (r|p) = -1}} \textrm{Li}\left(\frac{x^{1/2}}{r^{3/2}}\right) \gg \sum_{\substack{C\log^2 p \leq r\leq 2C \log^2 p \\ (r|p) = -1}} \textrm{Li}\left(\frac{x^{1/2}}{r^{3/2}}\right)  \gg \frac{x^{1/2}}{\log x \log p}.$$ Comparing this with the error term of Theorem \ref{thmGRH} completes the proof.\end{proof}

\begin{remark}We could possibly obtain a better exponent of the logarithm using a smooth counting for the primes (like in \cite{Ankeny}) which would lead to better bounds for Lemma \ref{GRHbound}.  \end{remark}

 \begin{remark} We could have obtained a similar result over all square-full numbers. However, in that case, assuming GRH, the Lindel\"{o}f Hypothesis gives the bound $\displaystyle{\sum_{m\leq x} \chi(m) \ll x^{1/2+\epsilon}}$ which does not allow us to obtain a logarithmic error term. 
 \end{remark}

 \section{Least square-free primitive root}\label{fries}
Although Zhai and Liu \cite{squarefree} have already proved the existence of small square-free primitive roots, we use this short section to reprove this fact. 
 
\begin{Lem}\label{sqrfree} 
 Let $\chi$ be a non-principal character modulo $p$. Then for all integers $r\geq 3$, we have
$$\sum_{m\leq x}\mu^2(m)\chi(m)\ll 
x^{1-1/r}p^{\frac{r+1}{4r^2}}(\log p)^{\frac{1}{2r}}. $$

\end{Lem}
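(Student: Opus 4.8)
The plan is to detect the squarefree condition by the standard convolution identity $\mu^2(m) = \sum_{d^2 \mid m} \mu(d)$, which turns the sum into a double sum over an auxiliary parameter $d$ and a free summation variable to which the Burgess inequality (Lemma \ref{Burgess}) applies directly. This mirrors the peeling-off argument used in Lemma \ref{highercharac}, the only difference being that here the squarefree indicator is detected rather than the representation $m = a^2 b^3$.

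First I would write
$$\sum_{m \leq x} \mu^2(m) \chi(m) = \sum_{m \leq x} \chi(m) \sum_{d^2 \mid m} \mu(d) = \sum_{d \leq \sqrt{x}} \mu(d) \sum_{\substack{m \leq x \\ d^2 \mid m}} \chi(m).$$
Setting $m = d^2 n$ and using complete multiplicativity, $\chi(m) = \chi(d)^2 \chi(n)$, so the inner sum equals $\chi(d)^2 \sum_{n \leq x/d^2} \chi(n)$. Since $\chi$ is non-principal, each inner sum $\sum_{n \leq x/d^2} \chi(n)$ can be estimated by Lemma \ref{Burgess} with the given $r$, giving $\sum_{n \leq x/d^2} \chi(n) \ll (x/d^2)^{1-1/r} p^{(r+1)/(4r^2)} (\log p)^{1/(2r)}$. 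Bounding $|\mu(d)\chi(d)^2| \leq 1$ and summing over $d$ then yields
$$\left| \sum_{m \leq x} \mu^2(m) \chi(m) \right| \ll x^{1-1/r} p^{\frac{r+1}{4r^2}} (\log p)^{\frac{1}{2r}} \sum_{d \leq \sqrt{x}} d^{-2(1-1/r)}.$$
The exponent is $2 - 2/r$, and the series $\sum_d d^{-(2-2/r)}$ converges precisely when $2 - 2/r > 1$, i.e.\ when $r > 2$; this is exactly the hypothesis $r \geq 3$, whence $\sum_{d \leq \sqrt{x}} d^{-(2-2/r)} \leq \zeta(2 - 2/r) = O(1)$ and the claimed bound follows.

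The main point to watch is the uniformity of the Burgess estimate across all inner-sum lengths $x/d^2$, in particular for $d$ close to $\sqrt{x}$, where $x/d^2 = O(1)$ and the Burgess bound is weaker than the trivial one. If one prefers not to invoke Burgess in that regime, I would split the $d$-sum at $D = x^{1/r}$ (which satisfies $D \leq \sqrt{x}$ for $r \geq 2$): apply Burgess to $d \leq D$, whose $d$-series still converges since $r \geq 3$, and the trivial bound $\sum_{n \leq x/d^2} 1 \leq x/d^2$ to $D < d \leq \sqrt{x}$, whose contribution is $\ll x/D = x^{1-1/r}$ and is absorbed into the main term because $p^{(r+1)/(4r^2)}(\log p)^{1/(2r)} \geq 1$. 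It is worth emphasising that the restriction $r \geq 3$, rather than $r \geq 2$ as in Lemma \ref{Burgess}, is essential: at $r = 2$ the exponent $2 - 2/r$ equals $1$, the $d$-sum produces an extra factor $\log x$, and the clean agreement with the Burgess bound is lost.
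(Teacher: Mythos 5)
Your proof is correct and follows essentially the same route as the paper: the convolution identity $\mu^2(m)=\sum_{d^2\mid m}\mu(d)$, Burgess applied to the inner complete character sum, and convergence of $\sum_d d^{-2(1-1/r)}$ for $r\geq 3$. Your extra care about the regime $d$ near $\sqrt{x}$ (splitting at $D=x^{1/r}$) is a sensible refinement the paper glosses over, but it does not change the argument.
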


\begin{proof}

Since $\mu(m)^2=\sum_{d^2\mid m}\mu(d)$, we obtain

$$\sum_{m\leq x}\mu^2(m)\chi(m)=\sum_{d\leq x^{1/2}} \mu(d)\chi(d^2)\sum_{m\leq x/d^2}\chi(m).$$ Then we apply the Burgess inequality (Lemma \ref{Burgess}) to show that

$$ \sum_{m\leq x/d^2}\chi(m) \ll \left(\frac{x}{d^2}\right)^{1/2}p^{\frac{r+1}{4r^2}}(\log p)^{\frac{1}{2r}}.$$ After summing over $d$ we obtain the desired result. 
\end{proof}

 In comparison with the previous section, we easily derive
 
 \begin{prop}
Denote by $g^{\square}(p)$ the least square-free primitive root modulo $p$. Then, for every $\epsilon>0$, we have $g^{\square}(p) \ll p^{1/4+\epsilon}$.
\end{prop}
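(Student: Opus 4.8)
The plan is to mirror the decomposition (\ref{cheese}) but with the square-full indicator replaced by $\mu^2(m)$. Using the standard (Vinogradov) expression for the indicator of primitive roots, I would first write
\begin{equation*}
N^{\square}(p,x) = \frac{\phi(p-1)}{p-1}\sum_{d\mid (p-1)}\frac{\mu(d)}{\phi(d)}\sum_{\chi\in\Gamma_d}\sum_{m\leq x}\mu^2(m)\chi(m),
\end{equation*}
where $N^{\square}(p,x)$ counts square-free primitive roots up to $x$, so that the term $d=1$ isolates the principal character and the terms $d>1$ collect the non-principal ones. The only structural difference from $\S$\ref{burger} is that the relevant arithmetic weight is now $\mu^2(m)$ rather than the square-full indicator, so the character sums that appear are exactly the ones estimated in Lemma \ref{sqrfree}.

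Next I would extract the main term from the principal character. Since $\chi_0(m)$ detects integers coprime to $p$, the inner sum for $d=1$ counts square-free integers up to $x$ not divisible by $p$, which equals $\frac{x}{\zeta(2)}(1+o(1))$ and in any case is $\gg x$. This produces a main term of size $\gg \frac{\phi(p-1)}{p-1}x$. For the non-principal contribution I would apply Lemma \ref{sqrfree} to each character sum. Using the triangle inequality together with the identity $|\Gamma_d|=\phi(d)$, the total weight collapses, since $\sum_{d\mid(p-1),\,d>1}\frac{|\mu(d)|}{\phi(d)}|\Gamma_d|=\sum_{d\mid(p-1),\,d>1}|\mu(d)|\leq 2^{\omega(p-1)}$, giving
\begin{equation*}
\left|\sum_{d\mid(p-1),\,d>1}\frac{\mu(d)}{\phi(d)}\sum_{\chi\in\Gamma_d}\sum_{m\leq x}\mu^2(m)\chi(m)\right| \ll 2^{\omega(p-1)}x^{1-1/r}p^{\frac{r+1}{4r^2}}(\log p)^{\frac{1}{2r}}.
\end{equation*}

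Crucially, the common factor $\frac{\phi(p-1)}{p-1}$ occurs in both the main and error terms, so it is irrelevant to the question of positivity: $N^{\square}(p,x)>0$ holds as soon as the main term dominates the error, i.e.\ as soon as
\begin{equation*}
x \gg 2^{r\omega(p-1)}\,p^{\frac14+\frac{1}{4r}}(\log p)^{1/2}.
\end{equation*}
For a fixed $r$ one has $2^{r\omega(p-1)}=p^{o(1)}$ (using $\omega(p-1)\ll \log p/\log\log p$) and $(\log p)^{1/2}=p^{o(1)}$, while $\frac{1}{4r}\to 0$ as $r\to\infty$. Hence, given $\epsilon>0$, I would choose $r$ with $\frac{1}{4r}<\epsilon/2$ and take $x=p^{1/4+\epsilon}$; then $N^{\square}(p,x)>0$ for all sufficiently large $p$, which yields $g^{\square}(p)\ll p^{1/4+\epsilon}$. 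The argument is essentially routine once Lemma \ref{sqrfree} is available; the only point needing care is the absorption of $2^{\omega(p-1)}$ and $(\log p)^{1/2}$ into $p^{\epsilon}$, together with the observation that the factor $\frac{\phi(p-1)}{p-1}$ plays no role in deciding positivity.
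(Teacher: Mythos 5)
Your proposal is correct and follows essentially the same route as the paper: the Vinogradov-type decomposition over characters, the main term from the principal character counting square-free integers coprime to $p$, Lemma \ref{sqrfree} for the non-principal contribution, and then letting $r\to\infty$ to absorb the $1/(4r)$ into $\epsilon$. If anything you are slightly more careful than the paper, which drops the $2^{\omega(p-1)}$ factor from its stated error term; your explicit absorption of $2^{r\omega(p-1)}$ and $(\log p)^{1/2}$ into $p^{o(1)}$ is the right way to justify that step.
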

\begin{proof}Denote by $N^{\square}(p,x)$ the number of square-free primitive roots modulo $p$ that do not exceed $x$. Proceeding similarly as in the proof of Theorem $1$ and Corollary \ref{leastfull}, and applying Lemma \ref{sqrfree}, we end 
up with 

$$N^{\square}(p,x)= \frac{p\phi(p-1)}{p^2-1} \frac{6}{\pi^2} x  + O(x^{1-1/r}p^{\frac{r+1}{4r^2}}(\log p)^{\frac{1}{2r}}).$$ For every integer $r$, the error terms are dominated by the main term as long as $x\gg p^{1/4+1/4r+\epsilon}$. Sending $r$  to infinity completes the proof.

\end{proof}

\end{document}